\theoremstyle{plain}
\newtheorem{thm}{Theorem}[section]
\newtheorem{lem}[thm]{Lemma}
\theoremstyle{definition}
\numberwithin{equation}{section}
\begin{document}
	
	\title[The sharp bound of the Hankel determinant]{The sharp bound of the third Hankel determinant  for inverse of convex functions}
	
	\author[B. Rath, K. S. Kumar, D. V. Krishna]{Biswajit Rath$^{1}$, K. Sanjay Kumar $^{2},$ D. Vamshee Krishna$^{3}$}
	
	\address{$^{1.2.3}$Department of Mathematics,
		GITAM School of Science, GITAM (Deemed to be University),
		Visakhapatnam- 530 045, A.P., India}

	\email{brath@gitam.edu$^{1*}$,skarri9@gitam.in$^{2}$,vamsheekrishna1972@gmail.com$^{3}$}

	\begin{abstract}
		The objective of this paper is to find the best possible upper bound of the third Hankel determinant for inverse of convex functions.
	\end{abstract}
	
	\keywords{Holomorphic function, univalent function, Hankel determinant, Inverse of Convex, Carath\'{e}odory function}
	
	\subjclass{30C45, 30C50}
	
	\date{}
	
	\maketitle
	
	\section{Introduction}
	Denote $\mathcal{H}$ the family all analytic functions in unit disk $\mathbb{D}=\{z\in\mathbb{C}:|z|<1\}$ and $\mathcal{A}$ be the subfamily of functions $f$ normilized by the conditions \\$f(0)=f'(0)-1=0,$ i.e, of the type
	\begin{equation}
		f(z)=\sum_{n=1}^{\infty}a_{n}z^{n},~a_{1}:=1,
	\end{equation}
	and $\mathcal{S}$ be the subfamily of $\mathcal{A},$ possessing univalent (schlicht) mappings. For $f\in\mathcal{S},$ has an inverse $f^{-1}$ given by
	\begin{equation}
		f^{-1}(w)=w+\sum_{n=2}^{\infty}t_nw^n,~~|w|<r_o(f);\left(r_o(f)\geq\frac{1}{4}\right).
	\end{equation}
	A typical problem in geometric function theory is to study a functional made up of combination of the coefficients of the original functions. For the positive integers $r,~n,$ Pommerenke \cite{Pommerenke1966} characterized the $r^{th}$- Hankel determinant of $n^{th}$-order for $f$ given in (1.1), defined as follows:
	\begin{equation}
		H_{r,n}(f)=  \begin{array}{|cccc|}
			a_{n} & a_{n+1} & \cdots & a_{n+r-1} \\
			a_{n+1} & a_{n+2} & \cdots & a_{n+r} \\
			\vdots& \vdots & \ddots & \vdots \\
			a_{n+r-1} & a_{n+r} & \cdots & a_{n+2r-2}
		\end{array}.
	\end{equation}
	
	The problem of finding sharp estimates of the third Hankel determinant obtained for $r = 3$ and $n = 1$ in (1.2), given by
	\begin{equation}
		H_{3,1}(f):= \begin{array}{|ccc|}
			a_{1}=1 & a_{2} & a_{3}\\
			a_{2} & a_{3} & a_{4}\\
			a_{3} & a_{4} & a_{5}
		\end{array}= 2a_2a_3a_4-a_3^3-a_4^2+a_3a_5-a_2^2a_5,
	\end{equation}
	is technically much tough than $r=n=2$.\\
	
	In recent years, many authors are working on obtaining upper bounds ~~~(see \cite{Babalola2010,Kwon2019-1,Sim2021,Srivastava2021,Zaprawa2021}) and a few papers were devoted to the estimation of sharp upper bound to $H_{3,1}(f),$ for certain subclasses of analytic functions (see \cite{Banga2020,Kowalczyk2018-1,Kowalczyk2018-2,Kwon2019-2,Lecko2019,Ullah2021}).
	Recently Lecko et al. \cite{Kowalczyk2018-1} obtained the sharp bound for the class of convex function denoted as $\mathcal{S}^c$, defined by 
	\begin{equation}
		\text{Re}\left\{1+\frac{zf''(z)}{f'(z)}\right\}>0.
	\end{equation}
	Motivated by these results, in this paper we obtain sharp estimate for  $H_{3,1}(f^{-1})$ when $f\in\mathcal{S}^c$ as $1/36$.
	
	The collection $\mathcal{P},$ of all functions $p,$ each one called as Carath\'{e}odory function \cite{Duren1983} of the form,
	\begin{equation}
		p(z) = 1+\sum_{t=1}^{\infty}c_{t}z^{t},
	\end{equation}
	having a positive real part in $\mathbb{D}$. In view of (1.4) and (1.5), the coefficients of the functions in $\mathcal{S}^c$ can be expressed in terms of coefficients of functions in $\mathcal{P}$. We then obtain the upper bound of $H_{3,1}(f^{-1})$, buliding our analysis on the familiar formulas of coefficients $c_2$ (see, \cite[p. 166]{Pommerenke1975}), $c_3$ (see \cite{LibZlo1982,Libera1983}) and $c_4$ can be found in \cite{Lecko2019}.
	
	The foundation for proofs of our main results is the following lemma and we adopt the procedure framed through Libera and Zlotkiewicz \cite{Libera1983}.
	\begin{lem}
		If $p\in\mathcal{P},$ is of the form (1.5) with $c_1\geq0,$ such that $c_1\in[0,2]$ then
		\begin{equation*}
			\begin{split}
				&2c_2=c_1^2+\nu\mu,
				\\
				&4c_3=c_1^3+2c_1\nu\mu-c_1\nu\mu^2+2\nu\left(1-|\mu|^2\right)\rho,
				\\
				\text{and}\\
				&\begin{split}
					8c_4=~&c_1^4+3c_1^2\nu\mu+\left(4-3c_1^2\right)\nu\mu^2+c_1^2\nu\mu^3+4\nu\left(1-|\mu|^2\right)\left(1-|\rho|^2\right)\psi\\&+4\nu\left(1-|\mu|^2\right)\left(c_1\rho-c\mu\rho-\bar{\mu}\rho^2\right),
				\end{split}
			\end{split}
		\end{equation*}
		where $\nu:=4-c_1^2,$  for some $\mu$, $\rho$ and $\psi$ such that $|\mu|\leq 1$, $|\rho|\leq 1$ and $|\psi|\leq1$.
	\end{lem}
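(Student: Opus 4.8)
The plan is to derive the three identities from the Schur parametrization of the Carath\'eodory class, following the route of Libera and Zlotkiewicz that the paper adopts. The starting point is to pass from $p\in\mathcal{P}$ to a Schur function: since $\re p>0$ in $\mathbb{D}$ and $p(0)=1$, the M\"obius transform $f=(p-1)/(p+1)$ is an analytic self-map of $\mathbb{D}$ with $f(0)=0$, so by the Schwarz lemma $\omega(z):=f(z)/z$ is again analytic with $\|\omega\|_\infty\le1$. Rewriting the transform as $p=(1+z\omega)/(1-z\omega)$ and matching the lowest coefficient gives $\omega(0)=c_1/2$, so the hypothesis $c_1\in[0,2]$ is exactly the statement that $d_0:=\omega(0)$ is real and lies in $[0,1]$.

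Next I would run the Schur algorithm on $\omega$. Setting $\omega_0:=\omega$ and, whenever $|\omega_k(0)|<1$,
$$\omega_{k+1}(z)=\frac{1}{z}\,\frac{\omega_k(z)-\omega_k(0)}{1-\overline{\omega_k(0)}\,\omega_k(z)},$$
each $\omega_{k+1}$ is again a Schur function, so its value at $0$ has modulus at most $1$. I would name the successive Schur parameters $\mu:=\omega_1(0)$, $\rho:=\omega_2(0)$, $\psi:=\omega_3(0)$; these are precisely the three parameters in the statement, and the bounds $|\mu|,|\rho|,|\psi|\le1$ come for free. Inverting the algorithm at each level gives $\omega_k=(\omega_k(0)+z\,\omega_{k+1})/(1+\overline{\omega_k(0)}\,z\,\omega_{k+1})$; since $d_0$ is real we have $\overline{d_0}=d_0$, which is exactly what makes $c_1$ appear unconjugated in the final formulas while the conjugate $\overline{\mu}$ survives in the deeper terms.

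The computational core is then bookkeeping. I would substitute the inversion formulas back one level at a time to express the Taylor coefficients $d_1,d_2,d_3$ of $\omega$ as polynomials in $d_0,\mu,\rho,\psi$, using $1-|d_0|^2=(4-c_1^2)/4=\nu/4$ at every occurrence. Feeding these into the identity $p\,(1-z\omega)=1+z\omega$ and matching coefficients yields $c_2,c_3,c_4$. The base case is immediate: $c_2=2d_1+2d_0^2$ together with $d_1=\mu(1-|d_0|^2)=\mu\nu/4$ gives $2c_2=c_1^2+\nu\mu$, and the same mechanism produces the $4c_3$ and $8c_4$ identities after collecting terms (I read the $c\mu\rho$ in the last line as $c_1\mu\rho$).

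I expect the only real difficulty to be the $8c_4$ identity, which needs three iterations of the Schur step and hence the full expansion of $\omega$ through order $z^3$; tracking the factors $(1-|\mu|^2)$ and $(1-|\rho|^2)$ that multiply the deeper parameters, and the conjugate $\overline{\mu}$ in the $\rho^2$ term, is where sign and indexing errors are most likely. The degenerate case $c_1=2$ (equivalently $\nu=0$) is handled separately and trivially: then $|d_0|=1$ forces $\omega$ to be the unimodular constant $1$, so $p(z)=(1+z)/(1-z)$ and $c_n=2$ for all $n$, which the three formulas reproduce regardless of the now irrelevant values of $\mu,\rho,\psi$.
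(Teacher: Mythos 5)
Your proposal is correct, but it cannot be compared against a proof in the paper for a simple reason: the paper never proves this lemma. It quotes the three identities from the literature --- the $c_2$ formula from Pommerenke's book, the $c_3$ formula from Libera--Zlotkiewicz, and the $c_4$ formula from Lecko, Sim and \'Smiarowska --- and in those sources the parametrization is obtained essentially from the Carath\'eodory--Toeplitz positivity conditions (non-negativity of the Toeplitz determinants built from the $c_n$), solved successively for $c_2$, $c_3$, $c_4$. Your route through the Schur algorithm is a genuinely different, self-contained derivation: writing $p=(1+z\omega)/(1-z\omega)$ gives $c_1=2d_0$, $c_2=2d_1+2d_0^2$, $c_3=2d_2+4d_0d_1+2d_0^3$, $c_4=2d_3+4d_0d_2+2d_1^2+6d_0^2d_1+2d_0^4$, and unwinding three Schur steps with parameters $\mu,\rho,\psi$ (using $1-d_0^2=\nu/4$ and $d_0=c_1/2$ real) reproduces each identity exactly, including the conjugate $\bar{\mu}$ on the $\rho^2$ term and the combination $(4-3c_1^2)\nu\mu^2$ arising from $\nu^2\mu^2-2c_1^2\nu\mu^2$; I checked this bookkeeping and it closes. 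What your approach buys is transparency about where $\mu,\rho,\psi$ live and why the bounds $|\mu|,|\rho|,|\psi|\le1$ are automatic, whereas the Toeplitz route buys the converse direction (every admissible choice of parameters actually comes from some $p\in\mathcal{P}$), which your argument does not address but which the paper also does not need. One small gap to patch: you treat only the degenerate case $c_1=2$, but the Schur recursion also halts if $|\mu|=1$ or $|\rho|=1$; in those cases the factors $1-|\mu|^2$ or $1-|\rho|^2$ kill every term containing the deeper parameters, so $\rho$ (respectively $\psi$) may be assigned arbitrarily and the identities still hold --- the same one-line remark you made for $c_1=2$, and worth stating explicitly.
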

	\section{Main result}
\begin{thm}
	If $f\in$ $\mathcal{S}^c$, then
	\begin{equation*}
		\big|H_{3,1}(f^{-1})\big| \leq \frac{1}{36}
	\end{equation*} 
	and the inequality is sharp for $p_0(z)=(1+z^3)/(1-z^3)$.
\end{thm}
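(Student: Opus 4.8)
The plan is to reduce everything to the Carath\'eodory coefficients $c_1,\dots,c_4$ and then optimize. Since $f\in\mathcal{S}^c$, there is a $p\in\mathcal{P}$ of the form (1.6) with $1+zf''(z)/f'(z)=p(z)$; rewriting this as $(zf'(z))'=p(z)f'(z)$ and matching coefficients yields $a_2=c_1/2$, $a_3=(c_1^2+c_2)/6$, together with analogous (longer) polynomial expressions for $a_4$ and $a_5$ in $c_1,c_2,c_3,c_4$. First I would record these four relations explicitly, since they are the only input about convexity that is needed.

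Next I would insert them into the classical inversion formulas $t_2=-a_2$, $t_3=2a_2^2-a_3$, $t_4=-5a_2^3+5a_2a_3-a_4$, and $t_5=14a_2^4-21a_2^2a_3+3a_3^2+6a_2a_4-a_5$, obtained from Lagrange inversion, so that each $t_k$ becomes a polynomial in $c_1,c_2,c_3,c_4$. Substituting these into $H_{3,1}(f^{-1})=2t_2t_3t_4-t_3^3-t_4^2+t_3t_5-t_2^2t_5$ and collecting terms produces a single polynomial $\Phi(c_1,c_2,c_3,c_4)$. I would emphasize here that $c_5$ never enters, which is precisely why Lemma 1.1 (which parametrizes only $c_2,c_3,c_4$) is sufficient.

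The optimization is the heart of the argument. Since replacing $p(z)$ by $p(\mathrm{e}^{\mathrm{i}\theta}z)$ leaves $\big|H_{3,1}(f^{-1})\big|$ unchanged, I may assume $c_1=c\in[0,2]$ and invoke Lemma 1.1 to replace $c_2,c_3,c_4$ by their expressions in $c$, $\nu=4-c^2$, and the parameters $\mu,\rho,\psi$ with $|\mu|,|\rho|,|\psi|\le 1$. Because $\psi$ enters $\Phi$ only linearly, through the last term of the formula for $8c_4$, I would apply the triangle inequality in $\psi$ first and bound $|\psi|\le 1$; I would then pass to the moduli $x=|\mu|$ and $y=|\rho|$ in $[0,1]$, absorbing the remaining phases by the triangle inequality, to arrive at a real majorant $\Psi(c,x,y)$ for $\big|H_{3,1}(f^{-1})\big|$ on the cube $[0,2]\times[0,1]\times[0,1]$.

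The final step is to show $\max\Psi=1/36$, and I expect this to be the main obstacle: the bound has to be attained, so any crude estimate will overshoot, and one typically must split the cube into regions (for instance according to whether $c$ is small or close to $2$) and, on each region, exploit monotonicity in one variable or run a one-variable calculus argument in the variable that remains. The sharpness statement pins down the maximizer and thereby guides the estimates: for $p_0(z)=(1+z^3)/(1-z^3)=1+2z^3+\cdots$ one has $c_1=c_2=c_4=0$ and $c_3=2$, which in the notation of Lemma 1.1 corresponds to $c=0$, $\mu=0$, $\rho=1$; evaluating $\Phi$ there must return exactly $1/36$, so the optimization should be organized to confirm that this boundary point is the global maximum.
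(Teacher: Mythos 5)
Your setup coincides with the paper's proof step for step: the same coefficient relations $a_2=c_1/2$, $a_3=(c_1^2+c_2)/6$, etc., the same Lagrange-inversion formulas for $t_2,\dots,t_5$, the same reduction of $H_{3,1}(f^{-1})$ to a polynomial in $c_1,\dots,c_4$ (the paper's (2.9)), and the same use of Lemma 1.1 followed by the triangle inequality in $\psi$, $\mu$, $\rho$ to produce a real majorant on the cube $[0,2]\times[0,1]\times[0,1]$ (the paper's $\vartheta(u,v,w)$ in (2.14)). Two of your remarks are actually slightly more careful than the paper: the rotation-invariance argument justifying $c_1\in[0,2]$ is left implicit there, and your identification of the extremal parameters $(c,|\mu|,|\rho|)=(0,0,1)$, i.e. $c_1=c_2=c_4=0$, $c_3=2$, matches the paper's maximum $\vartheta(0,0,1)=240=8640/36$ and its sharpness computation $t_2=t_3=t_5=0$, $|t_4|=1/6$, $|H_{3,1}(f^{-1})|=|{-t_4^2}|=1/36$.

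However, the proposal stops exactly where the real work begins. Saying that the final step is to show $\max\Psi=1/36$ and that one ``typically must split the cube into regions and exploit monotonicity or run a one-variable calculus argument'' is a plan, not a proof; the entire second half of the paper's argument (cases \textbf{A}--\textbf{D}) consists of carrying this out: evaluating $\vartheta$ at the eight vertices, bounding it on all the edges, analyzing all six faces --- including showing that the face function $g_3(u,v)=\vartheta(u,v,1)$ has no critical point in $(0,2)\times(0,1)$ --- and showing that in the open interior the equation $\partial\vartheta/\partial w=0$ forces $w_0=-uv(1+v)/\bigl(2(5-v)(1-v)\bigr)$, which is negative and hence outside $(0,1)$, so no interior critical point exists. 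You correctly anticipate that this is ``the main obstacle'' precisely because the bound is attained: every estimate used must be exact at $(u,v,w)=(0,0,1)$, so no crude inequality can substitute for the case analysis. Until that constrained maximization (or an equivalent argument) is actually performed, what you have established is only the reduction of the theorem to a finite-dimensional optimization problem, not the inequality $\big|H_{3,1}(f^{-1})\big|\le 1/36$ itself.
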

\begin{proof}
	For $f\in$ $\mathcal{S}^c$, there exists a holomorphic function $p \in\mathcal{P}$ such that
	\begin{equation}
		\left\{1+\frac{zf''(z)}{f'(z)}\right\}=p(z)\Leftrightarrow \left\{f'(z)+zf''(z)\right\}=p(z)f'(z)
	\end{equation}
	Using the series representation for $f$  and $p$ in (2.1), a simple calculation gives
	\begin{equation}
		\begin{split}
			&a_2=\frac{c_1}{2},~ a_3=\frac{c_1^2+c_2}{6},~a_4=\frac{1}{12}\left[\frac{1}{2}c_1^3+\frac{3}{2}c_1c_2+c_3\right]\\\text{and}~~ &a_5=\frac{1}{20}\left[\frac{1}{6}c_1^4+c_1^2c_2+\frac{1}{2}c_2^2+\frac{4}{3}c_1c_3+c_4\right]
		\end{split}
	\end{equation}
	Now from the defination (1.2), we have 
	\begin{equation}
		w=f(f^{-1})=f^{-1}(w)+\sum_{n=2}^{\infty}a_n(f^{-1}(w))^n.
	\end{equation}
	Further, we have
	\begin{equation}
		w=f(f^{-1})=w+\sum_{n=2}^{\infty}t_nw^n+\sum_{n=2}^{\infty}a_n(w+\sum_{n=2}^{\infty}t_nw^n)^n.
	\end{equation}
	Upon simplification, we obtain
	\begin{align}\nonumber
		(t_2+a_2)w^2+(t_3+2a_2t_2+a_3)w^3+(t_4+2a_2t_3+a_2t_2^2+3a_3t_2+a_4)w^4\\+(t_5+2a_2t_4+2a_2t_2t_3+3a_3t_3+3a_3t_2^2+4a_4t_2+a_5)w^5+......=0.
	\end{align}
	Equating the coefficients of like power in (2.5), upon simplification, we obtain
	\begin{equation}
		\begin{split}
			&t_2=-a_2; t_3=\{-a_3+2a_2^2\}; t_4=\{-a_4+5a_2a_3-5a_2^3\};\\&t_5=\{-a_5+6a_2a_4-21a_2^2a_3+3a_3^2+14a_2^4\}.
		\end{split}
	\end{equation}
	Using the values of $a_n (n=2,3,4,5)$ from (2.2) in (2.6), upon simplification, we obtain
	\begin{equation}
		\begin{split}
			&t_2=-\frac{c_1}{2},~t_3=\frac{1}{6}\left(2c_1^2-c_2\right),~t_4=\frac{1}{24}\left(-6c_1^3+7c_1c_2-2c_3\right)\\\text{and}~
			&t_5=\frac{1}{120}\left(-6c_4+22c_1c_3-46c_1^2c_2+7c_2^2+24c_1^4\right).
		\end{split}
	\end{equation}
	Now,
	\begin{align}
		H_{3,1}(f^{-1})&= \begin{array}{|ccc|}
			t_{1}=1 & t_{2} & t_{3}\\
			t_{2} & t_{3} & t_{4}\\
			t_{3} & t_{4} & t_{5}
		\end{array}~,
	\end{align}
	Using the values of $t_j,~ (j=2,3,4,5)$ from (2.7) in (2.8), it simplifies to give
	
	\begin{equation}
		\begin{split}
			H_{3,1}(f^{-1})=\frac{1}{8640}&\left[
		4c_1^6-24c_1^4c_2+12c_1^3c_3+39c_1^2c_2^2-44c_2^3+36c_1c_2c_3\right.\\&\left.-36c_1^2c_4-60c_3^2+72c_2c_4\right].
		\end{split}
	\end{equation}
	In view of (2.9), using the values of $c_2,~c_3~\text{and}~c_4$ from lemma 1.1, gives
	\begin{equation}
		\begin{split}
			24c_1^4c_2=&12\left[c_1^6+c_1^4\nu\mu\right];\\
			12c_1^3c_3=&3\left[c_1^6+2c_1^4\nu\mu-c_1^4\nu\mu^2+2c_1^3\nu(1-|\mu|^2)\rho\right]\\
			44c_2^3=&\frac{11}{2}\left[c_1^6+3c_1^4\nu\mu+3c_1^2\nu^2\mu^2+\nu^3\mu^3\right];\\
			39c_1^2c_2^2=&\frac{39}{4}\left[c_1^6+2c_1^4\nu\mu+c_1^2\nu^2\mu^2\right];
			\\
			36c_1c_2c_3=&\frac{9}{2}\left[c_1^6+3c_1^4\nu\mu+2c_1^2\nu^2\mu^2-c_1^4\nu\mu^2-c_1^2\nu^2\mu^3\right.\\&\left.+2\nu\left(c_1^3+c_1\nu\mu\right)\left(1-|\mu|^2\right)\rho\right];
			\\
			60c_3^2=&\frac{15}{4}\left[c^6+4c^4\nu\mu+4c^4\nu^2\mu^2-2c^4\nu\mu^2-4c^2\nu^2\mu^3+c^2\nu^2\mu^4\right.\\
			& \left.+4\nu(c^3+2c\nu\mu-c\nu\mu^2)(1-|\mu|^2)\rho+4\nu^2(1-|\mu|^2)^2\rho^2\right];
			\\
			72c_2c_4-36c_1^2c_4=& \frac{9}{2}\left[	c_1^4\nu\mu+3c_1^2\nu^2\mu^2+\left(4-3c_1^2\right)\nu^2\mu^3+c_1^2\nu^2\mu^4\right.\\&+4\nu^2c_1\mu\left(1-\mu\right)\left(1-|\mu|^2\right)\rho-4\nu^2\left(1-|\mu|^2\right)|\mu|^2\rho^2\\&\left.+4\nu^2\left(1-|\mu|^2\right)\left(1-|\rho|^2\right)\mu\psi
			\right].
		\end{split}
	\end{equation}
	Imputting the values from (2.10) in the  expression (2.9), after simplifying, we get
	\begin{equation}
		\begin{split}
			H_{3,1}(f^{-1})=\frac{1}{8640}&\left[
			\frac{3}{4}c_1^2\nu^2\mu^2-3c_1^2\nu^2\mu^3+\frac{3}{4}c_1^2\nu^2\mu^4-\frac{11}{2}\nu^3\mu^3+18\nu^2\mu^3\right.\\&-\left(3c_1\nu^2\mu+3c_1\nu^2\mu^2\right)\left(1-|\mu|^2\right)\rho-3\nu^2\left(5+|\mu|^2\right)\left(1-|\mu|^2\right)\rho^2\\&\left.+18\nu^2\mu\left(1-|\mu|^2\right)\left.(1-|\rho|^2\right)\psi\right].
		\end{split}
	\end{equation}
	Putting $u:=c_1$ and taking $\nu=\left(4-u^2\right)$ in (2.11), we obtain
	\begin{equation}
		\begin{split}
			H_{3,1}(f^{-1})=\frac{\left(4-u^2\right)^2}{8640}&\left[
			\frac{3}{4}u^2\mu^2+\frac{3}{2}u^2\mu^3+\frac{3}{4}u^2\mu^4-(4-u^2)\mu^3\right.\\&-3u\mu\left(1+\mu\right)\left(1-|\mu|^2\right)\rho-3\left(5+|\mu|^2\right)\left(1-|\mu|^2\right)\rho^2\\&\left.+18\mu\left(1-|\mu|^2\right)\left.(1-|\rho|^2\right)\psi\right].
		\end{split}
	\end{equation}
	Taking modulus on both sides of (2.12), using $|\mu|=v\in[0,1]$, $|\rho|=w\in[0,1]$, $c_1=u\in[0,2]$ and $|\psi|\leq 1$, we obtain
	\begin{equation}
		\bigg|H_{3,1}(f^{-1})\bigg|\leq\frac{\vartheta\left(u,v,w\right)}{8640},
	\end{equation}
	where $\vartheta:\mathbb{R}^3\rightarrow\mathbb{R}$ is defined as
	\begin{equation}
		\begin{split}
			\vartheta\left(u,v,w\right)=\left(4 - u^2\right)^2 &\left[
			\frac{3}{4}u^2v^2+\frac{3}{2}u^2v^3+\frac{3}{4}u^2v^4+\left(4-u^2\right)v^3\right.\\&+3uv\left(1+v\right)\left(1-v^2\right)w+3\left(5+v^2\right)\left(1-v^2\right)w^2\\&\left.+18v\left(1-v^2\right)\left.(1-w^2\right)\right]
		\end{split}
	\end{equation}
	Now, we are making an attempt to maximize the function $\vartheta\left(u,v,w\right)$ on \\$\Omega:=[0,2]\times[0,1]\times[0,1]$.\\
	{\bf A.} On the vertices of $\Omega$, from (2.14), we get
	\begin{equation*}
		\begin{split}
			&\vartheta\left(0,0,0\right)=\vartheta\left(2,0,0\right)=\vartheta\left(2,1,0\right)=\vartheta\left(2,0,1\right)=\vartheta\left(2,1,1\right)=0,\\&\vartheta\left(0,0,1\right)=240,~\vartheta\left(0,1,0\right)=\vartheta\left(0,1,1\right)=64.
		\end{split}
	\end{equation*}
	{\bf B.} On the edges of  $\Omega$, from (2.14), we have
	
	(i) For the edge $u=0,~v=0,~0<w<1,$ we obtain.
	\begin{equation*}
		\vartheta\left(0,0,w\right)=240 w^2\leq 240.
	\end{equation*}
	
	(ii) For the edge $u=0,~v=1,~0<w<1$, we obtain
	\begin{equation*}
		\vartheta\left(0,1,w\right)=64.
	\end{equation*}
	
	(iii) For $u=0,~w=0,~0<v<1$,
	\begin{equation*}
		\vartheta\left(0,v,0\right)=32 v (9 - 7 v^2)\leq192 \sqrt{\frac{3}{7}},,~\text{for}~v=\sqrt{2}.
	\end{equation*}
	
	(iv) For $u=0,~w=1,~0<v<1$,
	\begin{equation*}
		\vartheta\left(0,v,1\right)=240 - 192 v^2 + 64 v^3 - 48 v^4\leq240.
	\end{equation*}
	
	
	(v) For $v=0,~w=1,0<u<2$,
	\begin{equation*}
		\vartheta\left(u,0,1\right)=15 (4 - u^2)^2\leq240.
	\end{equation*}
	
	
	(vi) For the edges: $v=1,~w=0,0<u<2$ or $v=1,~w=1,0<u<2$, we have
	\begin{equation*}
		\vartheta\left(u,1,w\right)=(4 - u^2)^2 (4 + 2 u^2)\leq64.
	\end{equation*}
	
	
	(vii) For the edges:  $u=2,~v=0,~0<w<1$ or $u=2,~v=1,~0<w<1$ or
	
	$u=2,~w=0,~0<v<1$ or $c=2,~w=1,~0<v<1$ or $v=0,~w=0,~0<u<2$, 
	
	we obtain
	\begin{equation*}
		\vartheta\left(2,v,w\right)=0.
	\end{equation*}
	{\bf C.} Now, we consider the six faces of $\Omega$.
	
	(i) On the face $u=2$, from (2.14), we obtain $$\vartheta\left(2,v,w\right)=0.$$
	
	(ii) On the face $u=0,~v\in(0,1)~\text{and}~w\in(0,1)$ from (2.14), we get
	\begin{equation*}
		\begin{split}
			\vartheta\left(0,v,w\right)&=288 v - 224 v^3 + (240 - 288 v - 192 v^2 + 288 v^3 - 48 v^4) w^2\\&=288 v - 224 v^3 + 48 (5 - v) (-1 + v)^2 (1 + v) w^2\\&\leq288 v - 224 v^3 + 48 (5 - v) (-1 + v)^2 (1 + v)  \\&=240 - 192 v^2 + 64 v^3 - 48 v^4\leq240.
		\end{split}
	\end{equation*}
	
%
	
	(iii) On the face $v=0~u\in(0,2),~w\in(0,1)$, from (2.14), we obtain
	\begin{equation*}
		\vartheta\left(u,0,w\right)=15 (4 - u^2)^2 w^2\leq15 (4 - u^2)^2\leq240.
	\end{equation*}
	
%
	
	(iv) On the face $v=1,~u\in(0,2),~w\in(0,1)$, from (2.14), we observe that the
	
	function $\vartheta\left(u,1,w\right)$ is independent of $w$, from \textbf{B}(vi), we have $\vartheta\left(u,1,w\right)\leq240.$
	\\
	
	(v) On the face $w=0,~u\in(0,2),~v\in(0,1)$, from (2.14), we obtain
	\begin{equation*}
		\begin{split}
			\vartheta\left(u,v,0\right)=(4 - u^2)^2& \left(\frac{3 u^2 v^2}{4} + \frac{3 u^2 v^3}{2} + (4 - u^2) v^3 + \frac{3 u^2 v^4}{4} + 18 v (1 - v^2)\right)
			\\=(4 - u^2)^2& \left(18 v - 14 v^3 + u^2 \left(\frac{3 v^2}{4} + \frac{v^3}{2} + \frac{3 v^4}{4}\right)\right)\\\leq(4 - u^2)^2& \left(12 \sqrt{\frac{3}{7}}+ 2u^2\right)\leq192\sqrt{\dfrac{3}{7}},~u\in(0,2).
		\end{split}
	\end{equation*}
	
	
	(vi) On the face $w=1,$ in (2.14), we obtain
	\begin{align*}
		\vartheta\left(u,v,1\right)=(4 - u^2)^2& \bigg(
		\frac{3}{4}u^2v^2+\frac{3}{2}u^2v^3+\frac{3}{4}u^2v^4+(4-u^2)v^3\\&+3uv(1+v)(1-v^2)+3(5+v^2)\left(1-v^2\right)\bigg)
		\\:=g_3(u,v)&,~\text{with}~(u,v)\in\mathbb{R}^2.
	\end{align*}
	
Note that all real solutions (u,v) of the system of equation
	
	\begin{align*}
		\frac{\partial g_3}{\partial u}=\frac{3}{2} (-4 + u^2) &\left[8 (-1 + v) v (1 + v)^2 - 10 u^2 (-1 + v) v (1 + v)^2 \right.\\&\left.+u^3 v^2 (3 + 2 v + 3 v^2) - 4 u (-10 + 9 v^2 - 2 v^3 + 3 v^4)\right]=0
	\end{align*}

and

	\begin{align*}
		\frac{\partial g_3}{\partial v}&=\frac{3}{2} (-4 + u^2)^2 (-8 v (2 - v + v^2) + u^2 v (1 + v + 2 v^2) \\&+ 
		u (2 + 4 v - 6 v^2 - 8 v^3))=0
	\end{align*}

	by a numerical computation are the following
	\begin{equation*}
		(0,0),(-2.63625,-1.53087),(-1.0493,1.14045)~\text{and}~(\pm 2,x),x\in \mathbb{R}.
	\end{equation*}

	Therefore, $g_3$ has no critical point in $(0,2)\times(0,1).$
	
	{\bf D.} Now, consider the interior portion of $\Omega$ i.e. $(0,2)\times(0,1)\times(0,1).$\\
	
	Differentiating $\vartheta(u,v,w)$ partially with respect $w$, we obtain
	\begin{equation*}
		\begin{split}
		\frac{\partial \vartheta}{\partial w}=\frac{1}{2} (4 - u^2)^2& \left[60 w^2 + 3 v^2 (u^2 + 4 u w - 16 w^2) + 
		12 v (6 + u w - 6 w^2) \right.\\&\left.+ 3 v^4 (u^2 - 4 u w - 4 w^2) + 
		2 v^3 (-28 + u^2 - 6 u w + 36 w^2)\right]
	\end{split}
	\end{equation*}
	
	upon solving $\frac{\partial \vartheta}{\partial w}=0$, we get
	
	\begin{equation*}
		w_0=-\frac{u v (1 + v)}{2 (5 - v) (1 - v)}\notin(0,1)~\text{for}~(u,v)\in(0,2)\times(0,1)
	\end{equation*}
	
	Hence $\vartheta(u,v,w)$ has no critical point in the interior of $\Omega$.\\
	In review of cases \textbf{A}, \textbf{B}, \textbf{C} and \textbf{D}, we obtained 
	\begin{equation}
		\max\bigg\{\vartheta(u,v,w):u\in[0,2],v\in[0,1],~w\in[0,1]\bigg\}=240.
	\end{equation}
	From expression (2.13) and (2.15), we obtain
	\begin{equation}
		\Big|H_{3,1}(f^{-1})\Big|\leq\frac{1}{36}.
	\end{equation}
	For $p_0\in\mathcal{S}^c$, we obtain $t_2=t_3=t_5=0,~	t_4=1/6$, which follows the result.
\end{proof}
	\textbf{Data Availability:}~My manuscript has no associate data

\end{document}